\theoremstyle{plain}
\newtheorem*{theorem*}{Theorem}
\newtheorem*{corollary}{Corollary}
\newtheorem*{lemma}{Lemma}
\newcommand{\C}{\mathbb{C}}
\newcommand{\ie}{{\it{i.$\,$e.\ }}}
\newcommand{\re}{\operatorname{Re}}
\newcommand{\Log}{\mathrm{Log}}
\begin{document}
\title[The Multiplicative anomaly of several operators]{The Multiplicative anomaly  of three or more commuting elliptic operators}

\author{Victor Castillo-Garate}
\address{Mathematics Department, 202 Mathematical Sciences Bldg., University
of Missouri Columbia, MO 65211, USA}
\email{vactn7@mail.missouri.edu}

\author{Eduardo Friedman}
\address{Departamento de Matem\'aticas,
 Universidad de Chile, Casilla 653, Santiago, Chile}
\email{friedman@uchile.cl}

\author{Marius M\u antoiu}
\address{Departamento de Matem\'aticas,
 Universidad de Chile, Casilla 653, Santiago, Chile}
\email{mantoiu@uchile.cl}

\thanks{This work was partially supported by the Chilean Programa Iniciativa
 Cient\'{\i}fica Milenio grant ICM P07-027-F and Fondecyt grants 1110277 and 1120300.}
 \subjclass[2010]{Primary 58J52}
  \keywords{Multiplicative anomaly, spectral zeta function, Regularized products}

\begin{abstract}$\zeta$-regularized determinants are well-known to fail to be multiplicative, so that in general
 $\det_\zeta(AB)\not= \det_\zeta(A)\det_\zeta(B)$. Hence one is lead  to  study   the $n$-fold multiplicative
 anomaly
 $$
 M_n(A_1,...,A_n) :=\frac{\det_\zeta\!\!\Big(\!\prod_{i=1}^n A_i\Big)}{\prod_{i=1}^n \det_\zeta(A_i)}
 $$
attached to $n$ (suitable) operators $A_1,...,A_n$. We show that if the  $A_i$ are   commuting pseudo-differential
elliptic operators, then
their joint multiplicative anomaly can be expressed in terms of the pairwise multiplicative anomalies. Namely
$$
M_n(A_1,...,A_n)^{m_1+\cdots+m_n} =\prod_{1\le i<j\le n}M_2(A_i,A_j)^{m_i+m_j},
$$
where $m_j$ is the order of  $A_j$. The proof relies on Wodzicki's 1987 formula for the pairwise multiplicative anomaly $M_2(A,B)$ of two commuting elliptic operators.
\end{abstract}

\maketitle

\section{Introduction}
\noindent For an important class of operators $A$, one can define its $\zeta$-regularized determinant
 as
$$
\textstyle{\det_\zeta(A)}:=\exp\!\big(\!-\frac{d}{ds}\zeta_A(s)\big|_{s=0}\big),
$$
where $\zeta_A(s):=\sum_i \lambda_i^{-s}$ is the spectral  zeta function of $A$, extended to $s=0$ by
analytic continuation \cite{Se}.
Although such determinants have played an important role in    mathematical physics, geometry and number theory \cite{El1} \cite{El2} \cite{KV} \cite{JL}, it has long been known that
they fail to be multiplicative, \ie even for commuting operators $\det_\zeta(AB)\not= \det_\zeta(A)\det_\zeta(B)$,  in general.

This phenomenon has lead to  the  study of the multiplicative (or determinant) anomaly
$$
M_2(A,B) :=\frac{\det_\zeta(AB)}{\det_\zeta(A)\det_\zeta(B)}.
$$
A  formula for $M_2(A,B)$ was given by Wodzicki \cite{Wo} \cite[\S6]{Ka}. He assumed    $A$ and $B$ are
 commuting, positive, invertible, elliptic self-adjoint pseudo-differential operators of positive order acting on   the space of  smooth sections   of a  finite-dimensional complex vector bundle $E$ over a compact   $C^\infty$-manifold  $M$ without boundary. Here we have fixed a  Hermitian metric on $E$ and  a density on $M$.
Under these assumptions Wodzicki's formula reads \cite{Ka}
\begin{equation}\label{WODD}
\log\! \big(M_2(A,B)\big) = \frac{\mathrm{res}\big(\Log^2(\sigma_{A,B})\big)}{2\,\mathrm{ord}\,A\,
\mathrm{ord}\,B \,(\mathrm{ord}\,A +\mathrm{ord}\,B) },
\end{equation}
where
\begin{equation*}
\sigma_{A,B}:=A^{\mathrm{ord}\,B}B^{-\mathrm{ord\,A}},
\end{equation*}
  $\mathrm{ord}\,A$ is the order of $A$, and res denotes the Wodzicki residue.

Even with Wodzicki's formula, the  multiplicative anomaly $M_2(A,B)$ attached
 to pairs of commuting operators is in general   difficult to compute. It has been explicitly
computed in terms of special functions only for a handful of
cases (see \cite[\S2.3]{El2} and the references there). Perhaps for this reason the joint
multiplicative  anomaly
 $$
 M_n(A_1,...,A_n) :=\frac{\det_\zeta\!\!\Big(\!\prod_{i=1}^n A_i\Big)}{\prod_{i=1}^n \det_\zeta(A_i)}
 $$
attached to $n$  commuting operators $A_1,...,A_n$ seems  not to have been studied. There is a trivial reduction
$$
M_n(A_1,...,A_n)=M_{n-1}(A_1A_2,A_3,...,A_n)M_2(A_1,A_2)
$$
which can be unwound inductively into a formula for $M_n$ in terms of $M_2$'s, but it would be hardly practical as all of the $A_i$'s
are simultaneously involved in some of the resulting $M_2$'s.

We show that there is a simple formula expressing   $M_n(A_1,...,A_n)$ in terms of  the individual  $M_2(A_i,A_j)$.

\begin{theorem*} Suppose  $A_1,...,A_n$ are   $n$ commuting,  positive, invertible, elliptic self-adjoint pseudo-differential operators of positive order acting  on the  smooth sections of a  finite-dimensional vector bundle $E$ over a compact manifold $M$ without boundary. Then,   their joint multiplicative anomaly $M_n$  is defined for  $n\ge2$ and can be expressed in terms of the pairwise multiplicative anomalies $M_2$ as
\begin{equation}\label{main}
M_n(A_1,...,A_n)^{m_1+\cdots+m_n} =\prod_{1\le i<j\le n}M_2(A_i,A_j)^{m_i+m_j},
\end{equation}
where $m_i$ is the order of $A_i$.
\end{theorem*}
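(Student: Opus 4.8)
The plan is to take logarithms throughout and reduce everything, via the trivial reduction already recorded in the introduction, to a telescoping sum of pairwise anomalies to which Wodzicki's formula \eqref{WODD} applies. Writing $B_k:=A_1\cdots A_k$ and unwinding $M_n(A_1,\dots,A_n)=M_{n-1}(A_1A_2,A_3,\dots,A_n)M_2(A_1,A_2)$ completely gives
\begin{equation*}
\log M_n(A_1,\dots,A_n)=\sum_{k=2}^n \log M_2(B_{k-1},A_k).
\end{equation*}
Here I first check that the hypotheses are stable under the partial products: if the $A_i$ commute and are positive, invertible, elliptic and self-adjoint of positive order, then so is each $B_k$ (self-adjointness and positivity because commuting positive self-adjoint operators have positive self-adjoint product, ellipticity because a product of elliptic operators is elliptic, and orders add), so \eqref{WODD} is legitimately applicable to every pair $(B_{k-1},A_k)$.

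The crucial step is to rewrite \eqref{WODD} in an additive, symmetric form. For commuting positive operators $\Log(A^{\mathrm{ord}\,B}B^{-\mathrm{ord}\,A})=\mathrm{ord}\,B\,\Log A-\mathrm{ord}\,A\,\Log B$, and the two logarithms commute, so expanding the square and using the linearity of the Wodzicki residue turns \eqref{WODD} into
\begin{equation*}
\log M_2(A,B)=\tfrac12\big(\Phi(A)+\Phi(B)-\Phi(AB)\big),\qquad \Phi(C):=\frac{\mathrm{res}\big(\Log^2 C\big)}{\mathrm{ord}\,C}.
\end{equation*}
I would verify this identity by a direct algebraic computation, the only inputs being $\Log(AB)=\Log A+\Log B$, $\mathrm{ord}(AB)=\mathrm{ord}\,A+\mathrm{ord}\,B$, and the bilinearity and symmetry of $(\lambda,\mu)\mapsto\mathrm{res}(\lambda\mu)$ on the logarithms. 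Substituting this into the telescoping sum, the quantity $\Phi(B_{k-1})-\Phi(B_k)$ collapses and one is left with the closed form
\begin{equation*}
\log M_n(A_1,\dots,A_n)=\tfrac12\Big(\sum_{i=1}^n\Phi(A_i)-\Phi(P)\Big),\qquad P:=\prod_{i=1}^n A_i .
\end{equation*}

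It then remains to match this with the right-hand side of \eqref{main}. Setting $R_{ij}:=\mathrm{res}(\Log A_i\,\Log A_j)$ and $S:=m_1+\cdots+m_n$, I would expand both sides in the symmetric bilinear form $R_{ij}$. Since $\Phi(A_i)=R_{ii}/m_i$ and $\Phi(P)=\mathrm{res}\big((\sum_i\Log A_i)^2\big)/S=\sum_{i,j}R_{ij}/S$, the closed form above gives $S\log M_n=\tfrac{S}{2}\sum_i R_{ii}/m_i-\tfrac12\sum_{i,j}R_{ij}$. On the other hand the same symmetric form applied pairwise yields $(m_i+m_j)\log M_2(A_i,A_j)=\tfrac{m_j}{2m_i}R_{ii}+\tfrac{m_i}{2m_j}R_{jj}-R_{ij}$, and summing over $i<j$ (the coefficient of each diagonal $R_{ii}$ accumulating to $(S-m_i)/(2m_i)$ and that of each off-diagonal $R_{ij}$ to $-1$) reproduces exactly the same expression, using only $\sum_{i<j}R_{ij}=\tfrac12(\sum_{i,j}R_{ij}-\sum_i R_{ii})$. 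Comparing the two completes the proof.

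The computations are all elementary once the reformulation is in place, so I expect the only genuine obstacle to be foundational rather than combinatorial: one must ensure that $\Log$ of these operators is a legitimate object for which $\mathrm{res}(\Log^2 C)$ and the splitting $\Log(AB)=\Log A+\Log B$ make sense. This rests on Seeley's theory of complex powers and on the fact, already implicit in \eqref{WODD}, that the Wodzicki residue extends to the relevant log-polyhomogeneous operators; granting this (as we may, since \eqref{WODD} is assumed), the identity $\Log(A^sB^t)=s\Log A+t\Log B$ for commuting positive elliptic operators is standard functional calculus and carries the argument through.
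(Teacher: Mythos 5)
Your argument is correct, and it takes a genuinely different route from the paper. The paper proceeds by induction on $n$: it first proves the equal-order reduction $(\mathrm{ord}A+\mathrm{ord}B)\,\delta_2(A,B)=2\delta_2(A^{\mathrm{ord}B},B^{\mathrm{ord}A})$, then establishes a combinatorial lemma rearranging $\sum_{i<j}\delta_2(A_i^{m_j},A_j^{m_i})$ after the substitution $(A_1,A_2)\mapsto A_1A_2$, the whole thing resting on an explicit operator identity in the case $n=3$ verified by comparing coefficients of $\Log A_i\,\Log A_j$. You instead observe that Wodzicki's formula can be written as a coboundary, $\log M_2(A,B)=\tfrac12\big(\Phi(A)+\Phi(B)-\Phi(AB)\big)$ with $\Phi(C)=\mathrm{res}(\Log^2 C)/\mathrm{ord}\,C$, so that the fully unwound trivial reduction telescopes to the closed form $\log M_n=\tfrac12\big(\sum_i\Phi(A_i)-\Phi(\prod_iA_i)\big)$, after which \eqref{main} is a direct expansion in the symmetric form $R_{ij}=\mathrm{res}(\Log A_i\Log A_j)$. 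I checked your coefficient computations (the coefficients $b/2a(a+b)$, $a/2b(a+b)$, $-1/(a+b)$ of $R_{AA},R_{BB},R_{AB}$ match \eqref{WODD}, and the diagonal and off-diagonal bookkeeping in the final comparison is right). Your approach buys a conceptually cleaner statement --- the anomaly is exhibited as the coboundary of the potential $\Phi$, with no induction and no auxiliary lemma --- and it makes the Corollary's weighted-sum identity transparent as well. What the paper's route buys is that the residue is only ever applied to classical operators.

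That last point is the one place where your write-up needs a word of care. For $C$ of positive order, $\Log C$ carries a genuine $\log|\xi|$ term, so $\Log^2C$ and the individual products $\Log A_i\Log A_j$ are log-polyhomogeneous of log-degree $2$, and the classical Wodzicki residue is not defined on them; the paper sidesteps this because $\Log(\sigma_{A,B})$ has vanishing leading log part (its order is $0$) and so $\Log^2(\sigma_{A,B})$ is classical. You flag this yourself and appeal to an extension of $\mathrm{res}$; that suffices, but there is an even cheaper fix: choose \emph{any} symmetric bilinear extension of $(\lambda,\mu)\mapsto\mathrm{res}(\lambda\mu)$ from the order-zero combinations of the $\Log A_i$ to all of them. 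Every quantity you actually equate ($\log M_n$ via the telescoping sum, and each $(m_i+m_j)\log M_2(A_i,A_j)$) is a residue of a classical operator and hence independent of the choice, while the intermediate $\Phi$'s cancel; so no analytic input beyond \eqref{WODD} and linearity is needed. With that remark added, your proof is complete.
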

\noindent Our proof uses some elementary identities involving  the operator $\Log^2(\sigma_{A,B})$ appearing inside the Wodzicki residue in \eqref{WODD}.
A special
case of the above theorem was proved in \cite{CGF}.

The  theorem reduces the calculation of $M_n$ to that of $M_2$. In fact,  we can also reduce to  $M_k$ for  any  integer $k $  between 2 and $n$.

\begin{corollary} For  $2\le k\le n$, and letting  $C_q^p:=\frac{p!}{q!(p-q)!}$, we have \vskip.05cm
\begin{equation*}
M_n(A_1,\dots,A_n)^{(m_1+\dots+m_n)C^{n-2}_{k-2}}=\!\prod_{1\le i_1<\dots< i_k\le n}\!\!M_k(A_{i_1},\dots,A_{i_k})^{m_{i_1}+\dots+m_{i_k}}.
\end{equation*}
\end{corollary}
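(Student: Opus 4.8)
The plan is to pass to logarithms and reduce both sides to the pairwise anomalies $M_2$, since the Theorem already expresses every $M_k$ in terms of $M_2$'s. Set $S:=m_1+\cdots+m_n$. Taking $\log$ of the asserted identity, the Corollary becomes equivalent to
$$
S\,C^{n-2}_{k-2}\,\log M_n(A_1,\dots,A_n)=\sum_{1\le i_1<\dots<i_k\le n}(m_{i_1}+\dots+m_{i_k})\,\log M_k(A_{i_1},\dots,A_{i_k}),
$$
so that the whole matter comes down to verifying this linear relation among the numbers $\log M_2(A_a,A_b)$.

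First I would apply the Theorem \eqref{main} to each factor $M_k(A_{i_1},\dots,A_{i_k})$ appearing on the right. For a fixed index set $I=\{i_1,\dots,i_k\}$ this rewrites the corresponding summand as $\sum_{a<b,\,a,b\in I}(m_a+m_b)\log M_2(A_a,A_b)$, turning the entire right-hand side into a double sum ranging over all pairs $a<b$ that lie in some $k$-subset $I$ of $\{1,\dots,n\}$.

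The key step is then to interchange the order of summation. For a fixed pair $a<b$, the number of $k$-subsets $I$ containing both $a$ and $b$ equals the number of ways of choosing the remaining $k-2$ indices among the other $n-2$, namely $C^{n-2}_{k-2}$. Hence the double sum collapses to
$$
C^{n-2}_{k-2}\sum_{1\le a<b\le n}(m_a+m_b)\,\log M_2(A_a,A_b).
$$

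Finally, applying the Theorem \eqref{main} once more, now to the full family $A_1,\dots,A_n$, identifies the inner sum as $S\,\log M_n(A_1,\dots,A_n)$, which gives precisely $S\,C^{n-2}_{k-2}\,\log M_n$ and completes the argument. I expect no serious obstacle here: once the Theorem is in hand the content is purely combinatorial bookkeeping, and the only point demanding a little care is the count of $k$-subsets through a fixed pair, which is exactly what produces the binomial factor $C^{n-2}_{k-2}$.
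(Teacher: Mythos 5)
Your proof is correct and is essentially the paper's own argument: the paper likewise reduces both sides to pairwise terms via the Theorem (writing $\delta_\ell=\log M_\ell$ and $\mu(\gamma)=(\sum_{i\in\gamma}m_i)\,\delta_{\#\gamma}$), and the decisive step is the same count that each two-element subset lies in exactly $C^{n-2}_{k-2}$ of the $k$-element subsets. No gaps.
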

We shall prove the corollary at the end of the next section.

\section{Proofs}

Let $M$ be a compact smooth $C^\infty$-manifold provided with a 1-density, and let $E/M$ be an finite-dimensional
complex vector bundle over $M$ endowed with a Hermitian metric. Let $A$ be a pseudo-differential operator acting
on the $C^\infty$-sections of $E/M$. We can extend $A$ to a possibly unbounded operator on the Hilbert space of square-integrable
sections of $E/M$.
We shall say that $A$ satisfies Wodzicki's
hypothesis if $A$ is a positive, invertible, elliptic self-adjoint pseudo-differential operator of positive order.
  Then we can define the spectral zeta function
\begin{equation*}
 \zeta_A(s):=\sum_i \lambda_i^{-s} ,\qquad\qquad(\re(s)>m/ \mathrm{ord}\,A)
\end{equation*}
where $\lambda_i$ runs  over the (positive) eigenvalues of $A$ and the real branch of $\log$ is used to define the complex powers \cite{Se}. The spectral zeta function admits a meromorphic continuation to $\C$, regular at $s=0$, so we can
define the $\zeta$-regularized determinant of $A$ by
\begin{equation*}
 \textstyle{\det_\zeta(A)}:=\exp\!\big(\!-\zeta_A^\prime(0)\big).
\end{equation*}

If $A_1,...,A_n$ are $n$ commuting   operators  satisfying Wodzicki's hypothesis, their product $A:=A_1A_2\cdots \,A_n$ also satisfies it, so we can define
\begin{equation*}
\delta_n=\delta_n(A_1,...,A_n):= -\zeta_{A}^\prime(0)+\sum_{i=1}^n \zeta_{A_i}^\prime(0).
\end{equation*}
The  joint  multiplicative anomaly of the $A_i$ is then
\begin{equation*}
M_n=M_n(A_1,...,A_n):= \exp(\delta_n)=\frac{\det_\zeta(A_1A_2\cdots \, A_n)}{\det_\zeta(A_1)\det_\zeta(A_2)\cdots\,\det_\zeta(A_n)}.
\end{equation*}

We will prove the relation \eqref{main} between $M_n$ and the various $M_2$'s by induction on $n$. For this our
main tools will be the trivial reduction formula
 \begin{equation}\label{Red}
 \delta_n(A_1,...,A_n)= \delta_{n-1}( A_1A_2,...,A_n)+\delta_2(A_1,A_2),
\end{equation}
and
 Wodzicki's formula\footnote{\ Wodzicki has not published his proof, although he kindly sketched it to us in a letter. A proof can be found in \cite[p.\ 726]{Ok}.}
 \begin{equation}\label{Wod}
\delta_2(A,B) = \frac{\mathrm{res}\big(\Log^2(\sigma_{A,B})\big)}{2\,\mathrm{ord}\,A\,
\mathrm{ord}\,B \,(\mathrm{ord}\,A +\mathrm{ord}\,B) },
\end{equation}
where
\begin{equation*}
\sigma_{A,B}:=A^{\mathrm{ord}\,B}B^{-\mathrm{ord\,A}},
\end{equation*}
  $\mathrm{ord}\,A$ is the order of $A$, and res denotes the Wodzicki residue.
In fact, we shall need to know nothing about the Wodzicki residue beyond the fact that it is linear.
Instead, we shall rely on  some simple   properties of the operator $\Log $ acting on commuting self-adjoint operators.

We begin by noting that $\delta_2(A,B)$ can be expressed  in terms of $\delta_2$ of two operators having the same order. Namely,
\begin{equation}\label{C:Reducc grados ig}
(\mathrm{ord}\,A\,+\,\mathrm{ord}\,B)\, \delta_2(A,B)= 2 \delta_2(A^{\mathrm{ord}\,B},B^{\mathrm{ord}\,A}).
\end{equation}
The proof is immediate from Wodzicki's formula \eqref{Wod} and the linearity of
Wodzicki's residue.

The next calculation will be the main step in our inductive proof of the Theorem stated in \S1.
\begin{lemma}  Let $A_1,A_2,\dots,A_n$  be $n$ commuting operators, $n\ge3$, all satisfying Wod\-zicki's hypotheses, and set $m_i:=\mathrm{ord}\,A_i$. Then
\begin{align} \nonumber
     \sum_{1\leq{i}<{j}\leq{n}}  \delta_2(A_i^{m_j},A_j^{m_i}) = \sum_{j=2}^{n-1} & \, \delta_2\big((A_1A_2)^{m_{j+1}},A_{j+1}^{m_1+m_2}\big)  \,  +\,\sum_{3\leq{i}<{j}\leq{n}} {\delta_2(A_i^{m_j},A_j^{m_i})}\\ &+ \frac{m_1+\cdots+m_n}{2}\delta_2(A_1,A_2)  \label{lemma}.
\end{align}
\end{lemma}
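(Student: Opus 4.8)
The plan is to strip all the ``powered'' anomalies down to the ordinary two--operator anomaly $\delta_2(A_i,A_j)$ together with a single product anomaly $\delta_2(A_1A_2,A_j)$, and then to prove one clean product rule. First I note that the double sum $\sum_{3\le i<j\le n}\delta_2(A_i^{m_j},A_j^{m_i})$ occurs verbatim on both sides of \eqref{lemma}, so it cancels; after the harmless renaming $j+1\mapsto j$ in the first sum on the right, \eqref{lemma} becomes
\[
\delta_2(A_1^{m_2},A_2^{m_1})+\sum_{j=3}^n\!\big[\delta_2(A_1^{m_j},A_j^{m_1})+\delta_2(A_2^{m_j},A_j^{m_2})\big]=\sum_{j=3}^n\delta_2\big((A_1A_2)^{m_{j}},A_{j}^{m_1+m_2}\big)+\tfrac{m_1+\cdots+m_n}{2}\,\delta_2(A_1,A_2).
\]
Every anomaly here has its two arguments of equal order, so I apply the reduction \eqref{C:Reducc grados ig} to each term, replacing $\delta_2(A_i^{m_j},A_j^{m_i})$ by $\tfrac{m_i+m_j}{2}\delta_2(A_i,A_j)$ and $\delta_2((A_1A_2)^{m_j},A_j^{m_1+m_2})$ by $\tfrac{m_1+m_2+m_j}{2}\delta_2(A_1A_2,A_j)$.

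After clearing the common factor $\tfrac12$, the target identity becomes
\[
(m_1+m_2)\delta_2(A_1,A_2)+\sum_{j=3}^n(m_1+m_j)\delta_2(A_1,A_j)+\sum_{j=3}^n(m_2+m_j)\delta_2(A_2,A_j)=\sum_{j=3}^n(m_1+m_2+m_j)\delta_2(A_1A_2,A_j)+(m_1+\cdots+m_n)\delta_2(A_1,A_2).
\]
The crux is the product rule
\[
(m_1+m_2+m_k)\,\delta_2(A_1A_2,A_k)=(m_1+m_k)\,\delta_2(A_1,A_k)+(m_2+m_k)\,\delta_2(A_2,A_k)-m_k\,\delta_2(A_1,A_2)\qquad(k\ge3),
\]
which I shall call $(\star)$. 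To prove $(\star)$ I insert Wodzicki's formula \eqref{Wod}, writing each anomaly as $\mathrm{res}$ of the square of a linear combination of $\Log A_1,\Log A_2,\Log A_k$. Because the $A_i$ commute one has $\Log(A_1A_2)=\Log A_1+\Log A_2$ and $\Log(A_i^p)=p\,\Log A_i$, while the orders add under products; factoring $\mathrm{res}$ out by its linearity then reduces $(\star)$ to a single algebraic identity among the squared linear forms $m_k(\Log A_1+\Log A_2)-(m_1+m_2)\Log A_k$, the $m_k\Log A_i-m_i\Log A_k$, and $m_2\Log A_1-m_1\Log A_2$. Since these commuting operators may be treated as formal variables, the identity is settled by matching the six coefficients of the products $(\Log A_i)(\Log A_j)$.

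I expect $(\star)$ to be the main obstacle: the denominators $2\,\mathrm{ord}\,A\,\mathrm{ord}\,B\,(\mathrm{ord}\,A+\mathrm{ord}\,B)$ in \eqref{Wod} do not distribute over the product $A_1A_2$, so it is not \emph{a priori} clear either that the cross terms should cancel or that the correction term should come out exactly as $-m_k\,\delta_2(A_1,A_2)$; this is where the orders must genuinely be juggled. Granting $(\star)$, the rest is bookkeeping: substituting it into the right-hand side of the reduced identity turns $\sum_{j=3}^n(m_1+m_2+m_j)\delta_2(A_1A_2,A_j)$ into $\sum_{j=3}^n(m_1+m_j)\delta_2(A_1,A_j)+\sum_{j=3}^n(m_2+m_j)\delta_2(A_2,A_j)-(m_3+\cdots+m_n)\delta_2(A_1,A_2)$, so the two sums match the left-hand side term by term, while the coefficient of $\delta_2(A_1,A_2)$ collapses to $(m_1+\cdots+m_n)-(m_3+\cdots+m_n)=m_1+m_2$, again matching. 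This closes the proof, with essentially all of the content concentrated in the product rule $(\star)$.
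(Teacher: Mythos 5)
Your proof is correct and follows essentially the same route as the paper: your product rule $(\star)$ is precisely the paper's identity \eqref{E:cuadrado}, established in the same way (Wodzicki's formula \eqref{Wod} plus matching coefficients of $\Log A_i\,\Log A_j$ in the squared linear forms), and the reduction via \eqref{C:Reducc grados ig} is identical. The only difference is cosmetic: you sum $(\star)$ directly over $j=3,\dots,n$, whereas the paper proves the case $n=3$ and then runs an induction whose inductive step is again exactly $(\star)$.
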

\begin{proof}
Since
\begin{align*}
\sum_{1\leq{i}<{j}\leq{n}} \delta_2(A_i^{m_j},A_j^{m_i})  -    \sum_{3\leq{i}<{j}\leq{n}}  \delta_2(A_i^{m_j},&A_j^{m_i})\\ & =\sum_{j=2}^n \delta_2(A_1^{m_j},A_j^{m_1})  + \sum_{j=3}^{n} \delta_2(A_2^{m_j},A_j^{m_2}),
\end{align*}
it suffices to prove
\begin{multline}\label{E:Probar inducc}
      \sum_{j=2}^{n-1} \delta_2\big((A_1A_2)^{m_{j+1}},A_{j+1}^{m_1+m_2}\big)
         + \frac{m_1+\cdots+m_n}{2}\delta_2(A_1,A_2)   \\
 = \sum_{j=2}^{n} \delta_2(A_1^{m_j},A_j^{m_1})+ \sum_{j=3}^{n} \delta_2(A_2^{m_j},A_j^{m_2}).
\end{multline}
We first consider $n=3$. Then \eqref{E:Probar inducc} reads
\begin{multline}\label{n3}
       \delta_2\big((A_1A_2)^{m_3},A_3^{m_1+m_2})+\frac{m_1+m_2+m_3}{2}\delta_2(A_1,A_2)\\ = \delta_2(A_1^{m_2},A_2^{m_1}) +\delta_2(A_1^{m_3},A_3^{m_1})
    +\delta_2(A_2^{m_3},A_3^{m_2}).
\end{multline}
Using  \eqref{C:Reducc grados ig}, after some simple cancellations  we find that to prove \eqref{n3} we must prove
\begin{multline}\label{E:cuadrado}
     (m_1+m_2+m_3)\delta_2(A_1A_2,A_3)+m_3 \delta_2(A_1,A_2)\\ =(m_1+m_3)\delta_2(A_1,A_3)+(m_2+m_3)\delta_2(A_2,A_3).
\end{multline}
In view of Wodzicki's formula \eqref{Wod},  we  compute
\begin{align*}
 &\frac{m_1+m_2+m_3}{2(m_1+m_2)m_3(m_1+m_2+m_3)}\,\Log^2 \big(       (A_1A_2)^{m_3} A_3^{-m_1-m_2}     \big)
 \\ & \qquad      \qquad   \qquad  \qquad  \qquad  +
\frac{m_3}{2m_1m_2(m_1+m_2)}\,\Log^2 \big(     A_1^{m_2} A_2^{-m_1}      \big)\\ &
\\
  & \   =\frac{1}{2(m_1+m_2)} \Big( \frac{\big(    m_3(\Log\,A_1+\Log\,A_2) -(m_1+m_2)\Log\,A_3  \big)^2 }{m_3}
  \\ & \qquad  \qquad  \qquad  \qquad  \qquad  \qquad  \qquad  \qquad  \qquad
          \quad +\frac{m_3\big(  m_2\Log\,A_1-m_1\Log\,A_2    \big)^2   }{m_1m_2} \Big)   \\ &    \\
   & \ =  \frac{(m_3\Log\,A_1-m_1\Log\,A_3)^2}{2m_1m_3}
              + \frac{(m_3\Log\,A_2 - m_2\Log\,A_3)^2}{2m_2m_3}\quad [\text{to check this step,}
              \\ & \qquad\qquad \text{ compare coefficients  of } \Log\,A_i\,\Log\,A_j \text{ on both sides for } 1\le i\le j\le3]   \\
    &\  = \frac{m_1+m_3}{2m_1m_3(m_1+m_3)}   \Log^2 \big(   A_1^{m_3} A_3^{-m_1}      \big)  +\frac{m_2+m_3}{2m_2m_3(m_2+m_3)}  \Log^2 \big(    A_2^{m_3} A_3^{-m_2}     \big).
\end{align*}
If we now  apply the Wodzicki residue to the above equation,  formula \eqref{Wod} and linearity  of the residue  yield \eqref{n3}.
This proves the case $n=3$.

We can now complete the proof of the lemma by induction on $n$.
Comparing \eqref{E:Probar inducc} for $n$ and $n+1$, we find that the inductive step amounts to
 $$ \delta_2((A_1A_2)^{m_{n+1}},A_{n+1}^{m_1+m_2})+\frac{m_{n+1}}{2}\delta_2(A_1,A_2)=
 \delta_2(A_1^{m_{n+1}},A_{n+1}^{m_1})+\delta_2(A_2^{m_{n+1}},A_{n+1}^{m_2}). $$
Using \eqref{C:Reducc grados ig}, we see that the above is exactly \eqref{E:cuadrado}, with $A_3$ replaced by $A_{n+1}$. \end{proof}

We  now prove the theorem stated in \S1,  in the equivalent form
\begin{align}\label{TheCore}
\delta_n(A_1,...,A_n)=\sum_{1\le i<j\le n} \frac{m_i+m_j}{m_1+\dots+m_n}\delta_2(A_i,A_j).
\end{align}
We again proceed by induction on $n$. For $n=2$ both sides of \eqref{TheCore}   are trivially equal, so we suppose $n\ge3$.  By the inductive hypothesis and the equal-orders formula \eqref{C:Reducc grados ig}, we have
\begin{align*}
\delta_{n-1}(A_1A_2,A_3,&\dots,A_n)\\ & = 2\frac{\sum_{j=2}^{n-1} \delta_2\big((A_1A_2)^{m_{j+1}},A_{j+1}^{m_1+m_2}\big)  +  \sum_{3\leq{i}<{j}\leq{n}} \delta_2(A_i^{m_j},A_j^{m_i})}  {(m_1+m_2)+\dots+m_n}.
\end{align*}
 Substituting this into the trivial reduction formula \eqref{Red}  we find
\begin{eqnarray*}
  \delta_n(A_1,A_2,\dots,A_n) &=&  \frac{2}{m_1+m_2+\cdots+m_n} \bigg (  \sum_{j=2}^{n-1} {\delta_2\big((A_1A_2)^{m_{j+1}},A_{j+1}^{m_1+m_2}\big)} \\
    && \quad +  \sum_{3\leq{i}<{j}\leq{n}} {\delta_2(A_i^{m_j},A_j^{m_i})} +\frac{m_1+\dots+m_n}{2}\delta_2(A_1,A_2) \bigg )
    \\
    &=&  \frac{2}{m_1+m_2+\cdots+m_n} \sum_{1\leq{i}<{j}\leq{n}} {\delta_2(A_i^{m_j},A_j^{m_i})}  \quad \quad  [\text{use }\eqref{lemma}]
     \\ &=&  \frac{1}{m_1+\cdots+m_n} \sum_{1\leq{i}<{j}\leq{n}}(m_i+m_j)  \delta_2(A_i ,A_j ) \qquad  [\text{use \eqref{C:Reducc grados ig}}],
\end{eqnarray*}
which concludes the proof of  \eqref{TheCore}.\qed

\medskip

We now prove the corollary stated at the end of \S1.
It suffices to prove,  for $2\le k\le n$,
\begin{equation}\label{mihaela}
(m_1+\dots+m_n)C^{n-2}_{k-2}\delta_n(A_1,\dots,A_n)=\!\sum_{1\le i_1<\dots< i_k\le n}\!\!(m_{i_1}+\dots+m_{i_k})\delta_k(A_{i_1},\dots,A_{i_k}).
\end{equation}
Set $\omega:=\{1,\dots\,n\}$. For a subset $\gamma=\{i_1,\dots,i_\ell\}\subset\omega$ of cardinality $\#\gamma=\ell$, set
$$
\mu(\gamma):=(m_{i_1}+\dots+m_{i_\ell})\delta_\ell(A_{i_1},\dots,A_{i_\ell}),
$$
which is unambiguously defined due to the symmetry of $\delta_\ell$ for commuting operators.
In this notation,  \eqref{mihaela}  amounts to
\begin{equation*}
C^{n-2}_{k-2}\,\mu(\omega)=\sum_{\beta\subset\omega,\#\beta=k}\!\mu(\beta).
\end{equation*}
Our main theorem can be rewritten as
\begin{equation}\label{toma}
\mu(\beta)=\sum_{\alpha\subset\beta,\#\alpha=2}\!\mu(\alpha).
\end{equation}
We have, using \eqref{toma},
\begin{equation*}
\sum_{\beta\subset\omega,\#\beta=k}\mu(\beta)=\sum_{\beta\subset\omega,\#\beta=k}\,\sum_{\alpha\subset\beta,\#\alpha=2}\mu(\alpha)
=C^{n-2}_{k-2}\!\sum_{\alpha\subset\omega,\#\alpha=2}\mu(\alpha),
\end{equation*}
since every two-element subset  $\alpha\subset\omega=\{1,\dots,n\}$ is contained in exactly $C_{k-2}^{n-2}$ subsets $\beta\subset\omega$ of cardinality $k$. We conclude the proof by again using \eqref{toma}, with $\beta=\omega$.
\qed

\end{document}